\newcommand{\beas}{\begin{eqnarray*}}
\newcommand{\eeas}{\end{eqnarray*}}
\newcommand{\bea}{\begin{eqnarray}}
\newcommand{\eea}{\end{eqnarray}}
\newcommand{\beq}{\begin{equation}}
\newcommand{\eeq}{\end{equation}}
\newcommand{\ben}{\begin{enumerate}}
\newcommand{\een}{\end{enumerate}}
\newtheorem{theorem}{Theorem}[section]
\newtheorem{lemma}[theorem]{Lemma}
\newtheorem{proposition}[theorem]{Proposition}
\newtheorem{corollary}[theorem]{Corollary}
\newtheorem{conjecture}[theorem]{Conjecture}
\theoremstyle{definition}
\definecolor{darkblue}{rgb}{0,0,0.6}
\author[Sung Gi Park]{Sung Gi Park}
\address{Department of Mathematics, Harvard University, Cambridge, MA 02138}
\email{sgpark@math.harvard.edu}
\author[Richard P. Stanley]{Richard P. Stanley}
\address{Department of Mathematics, MIT, Cambridge, MA 02139}
\email{rstan@math.mit.edu}
\author[Fabrizio Zanello]{Fabrizio Zanello}
\address{Department of Mathematical Sciences, Michigan Tech, Houghton, MI 49931}
\email{zanello@mtu.edu}
\title[Proof of the Gorenstein Interval Conjecture in low socle degree]{Proof of the Gorenstein Interval Conjecture\\in low socle degree}
\begin{document}

\begin{abstract} 
Roughly ten years ago, the following ``Gorenstein Interval Conjecture'' (GIC) was proposed:  Whenever   $(1,h_1,\dots,h_i,\dots,h_{e-i},\dots,h_{e-1},1)$ and $(1,h_1,\dots,h_i+\alpha ,\dots,h_{e-i}+\alpha,\dots,h_{e-1},1)$ are both Gorenstein Hilbert functions for some $\alpha \geq 2$, then $(1,h_1,\dots,h_i+\beta ,\dots, h_{e-i}+\beta,\dots,h_{e-1},1)$ is also Gorenstein, for all $\beta =1,2,\dots,\alpha -1$. Since an explicit characterization of which Hilbert functions are Gorenstein is widely believed to be hopeless, the GIC, if true, would at least provide the existence of a strong, and very natural, structural property for such basic functions in commutative algebra. Before now, very little progress was made on the GIC. 

The main goal of this note is to prove the case $e\le 5$, in arbitrary codimension. Our arguments will be in part constructive, and will combine several different tools of commutative algebra and classical algebraic geometry.
\end{abstract}

\keywords{Gorenstein Hilbert function; Gorenstein algebra; level algebra; Interval Conjecture; unimodality; Macaulay's inverse system}
\subjclass[2010]{Primary: 13H10; Secondary: 13E10, 13D40, 05E40}

\maketitle

\section{Introduction} 

We consider \emph{standard graded artinian algebras} $A=R/I$, where $R = k[x_1,\dots,x_r]$ is a polynomial ring over a field $k$ of characteristic zero, $I$ is a homogeneous ideal of $R$, and the variables $x_i$ have degree one. Since we do not lose  generality by assuming  that $I$ does not contain nonzero linear forms,  $r$ is the \emph{codimension} of $A$.

Recall that the \emph{Hilbert function} (HF) of $A$ is defined as $h_i=\dim_k A_{i}$, for all $i\ge 0$. It is a standard fact in commutative algebra that $A$ is artinian if and only if the HF of $A$ is eventually zero. Thus, we can identify the HF of $A$ with its $h$-vector $h=(1,h_1,\dots,h_e)$, where $e$ is the largest index such that $h_e>0$ and is called the \emph{socle degree} of $A$. The \emph{socle}  of $A$ is the annihilator of the maximal ideal $(\overline{x_1}, \dots ,\overline{x_r})\subset A$. Finally, $A$ is \emph{level of type $t$} if its socle is a $t$-dimensional $k$-vector space all concentrated in degree $e$. We say that $A$ is \emph{Gorenstein} if $A$ is level of type 1.

An important theme in combinatorial commutative algebra is the understanding of level and Gorenstein HFs. In our paper, we present a new contribution in this direction.

The theory of level algebras began with a couple of seminal papers of the second author in the Seventies \cite{St1,St}, and has since been an active area of research, due to the intrinsic interest of the topic and also its applications to several disciplines as diverse as combinatorics, algebraic geometry, representation theory, and even computational complexity. We refer the interested reader to \cite{AMS,AS,BI,BL,BZ,CI,GHMS,Ia1,IK,IS,MNZ1,MNZ3,MNZ2,MZ,SS,We,Za1,Za2} as a highly nonexhaustive list of recent works and further sources.

Despite the considerable progress of the last several years, however, it is widely accepted by experts that an explicit classification of Gorenstein and level HFs is in general hopeless. The only two broad classes that have been characterized to date are those of level HFs  of codimension two (a simple result already known to Macaulay \cite{Ma}; see also \cite{Ia1}), and Gorenstein HFs of codimension three (\cite{St}; see \cite{Za} for a combinatorial proof). 

Absent a full characterization, it is an interesting problem to determine strong structural results for Gorenstein and level HFs. One such general, and very natural, conjectural statement was suggested roughly ten years ago by the third author \cite{Za3}. We recall here the statement in the Gorenstein case, and refer to \cite{Za3} for the conjecture in higher type.

\begin{conjecture}\label{GIC} \emph{(Gorenstein Interval Conjecture (GIC))}. Suppose that for some $\alpha \geq 2$,  $(1,h_1,\dots,h_i,\dots,h_{e-i},\dots,h_{e-1},1)$ and $(1,h_1,\dots,h_i+\alpha ,\dots,h_{e-i}+\alpha,\dots,h_{e-1},1)$ are Gorenstein HFs. Then $$(1,h_1,\dots,h_i+\beta ,\dots, h_{e-i}+\beta,\dots,h_{e-1},1)$$ is also Gorenstein, for each $\beta =1,2,\dots,\alpha -1$.
\end{conjecture}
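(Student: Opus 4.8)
The plan is to recast the conjecture through Macaulay's inverse systems and reduce it to a single ``rank-drop'' statement that is uniform in the socle degree $e$. Since $k$ has characteristic zero, every artinian Gorenstein quotient $A=R/I$ of socle degree $e$ can be written as $A=R/\Ann(F)$ for a single form $F$ of degree $e$ in the dual polynomial ring, on which $R$ acts by contraction. In these terms $h_i=\dim_k(R_i\circ F)$ is the rank of the $i$-th catalecticant $\mathrm{Cat}_i(F)\colon R_i\to (R_{e-i})^{\vee}$. The self-duality of the catalecticants gives $\rank\mathrm{Cat}_i=\rank\mathrm{Cat}_{e-i}$, so the symmetry $h_i=h_{e-i}$ is automatic, and -- crucially -- to move \emph{within} a symmetric pair $\{i,e-i\}$ it suffices to control the single rank $h_i$.

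Next I would reduce the interval statement to ``peeling one copy off the top.'' Writing $\delta$ for the symmetric increment supported on positions $i$ and $e-i$, it is enough to prove the single claim: if $h$ and $h+\alpha\,\delta$ are Gorenstein HFs with $\alpha\ge 2$, then $h+(\alpha-1)\,\delta$ is Gorenstein as well. Granting this, a downward induction produces every intermediate value: from the endpoints $h$ and $h+m\,\delta$ with $m\ge 2$ one obtains $h+(m-1)\,\delta$, which together with $h$ again has gap $\ge 2$ whenever $m-1\ge 2$; iterating yields $\beta=\alpha-1,\alpha-2,\dots,1$ in turn. Thus the entire conjecture, for all $e$ and all codimensions, follows from one socle-degree-free existence result.

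I would then attack that single step constructively on the dual side. Let $F$ be a dual generator for the large function $h+\alpha\,\delta$. The goal is a degeneration $F\rightsquigarrow G$, within forms of degree $e$, that lowers $\rank\mathrm{Cat}_i$ by exactly one while preserving $\rank\mathrm{Cat}_j$ for every $j\neq i,e-i$; the companion drop at $e-i$ is then forced by duality, and $G$ realizes the desired intermediate Gorenstein algebra. The existence of the \emph{small} algebra $A$ with HF $h$ should be leveraged to certify that such a rank-drop is geometrically available rather than obstructed, for instance by arranging that the apolar structure of $G$ interpolates between those of $A$ and the large algebra along a stratum of the variety cut out by the prescribed catalecticant ranks.

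The hard part will be exactly this surgical, single-catalecticant degeneration carried out uniformly in $e$. Rank is only lower semicontinuous, so a generic pencil between the two given generators tends to \emph{raise} ranks and overshoot, whereas an arbitrary specialization that lowers $\mathrm{Cat}_i$ typically disturbs the ranks at other degrees, destroying both the target HF and, a priori, the Gorenstein property. Because there is no known characterization of Gorenstein HFs -- they need not even be unimodal in codimension $\ge 4$ -- one cannot retreat to a combinatorial sufficient condition such as an SI-sequence, which already fails to capture all Gorenstein functions. Making the degeneration affect a single degree and proving that the output is again attained by a genuine Gorenstein algebra, for every socle degree at once, is the essential obstacle; it is precisely this uniform step that separates the general conjecture from the low-degree cases and where a genuinely new idea is required.
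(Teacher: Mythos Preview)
The statement you are attempting to prove is Conjecture~\ref{GIC}, and the paper does \emph{not} prove it in general; it establishes only the case $e\le 5$. So there is no ``paper's own proof'' of the full statement to compare against, and your proposal must be judged on its own merits as a purported proof of an open conjecture.

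On those merits, your proposal has a genuine gap, which you in fact identify yourself. Your reduction to the ``peel one off the top'' step is correct and harmless, but that step is the entire content of the conjecture: you must produce, from the two given Gorenstein HFs, a Gorenstein algebra realizing the intermediate one. Your plan is to degenerate the dual form $F$ of the large algebra so that exactly one catalecticant rank drops by one and all others are preserved. You give no construction of such a $G$, and you concede in your final paragraph that this is ``where a genuinely new idea is required.'' That is not a proof. Moreover, there is a conceptual obstacle you do not address: nothing guarantees that the intermediate algebra arises as a specialization of the large one at all. The loci in $\mathbb{P}(S_e)$ of forms with prescribed catalecticant ranks can be reducible and badly behaved, and the two given Gorenstein algebras may lie on components that do not communicate through any pencil or one-parameter family. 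Invoking ``the existence of the small algebra should certify that the rank-drop is available'' is not an argument; it is a restatement of what must be proved.

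For contrast, the paper's route in socle degree $e\le 5$ is quite different from yours and does not attempt any catalecticant surgery. It first reduces (Proposition~\ref{f}) the GIC for $e\in\{4,5\}$ to the monotonicity of the function $f_e(r)$ giving the minimal degree-two entry, using Lemma~\ref{gic2}. It then proves monotonicity by showing (Theorem~\ref{n}) that restricting the dual generator $F$ modulo a \emph{general} linear form $H$ drops the codimension by exactly one, so that $f_e(r-1)\le f_e(r)$. The heart of the argument is Lemma~\ref{divisibility}, a Bertini-type statement ruling out that a general hyperplane divides any nonzero linear combination of the first partials of $F$. This is a one-variable-at-a-time reduction on the \emph{codimension}, not a rank-one surgery on a fixed catalecticant, and it works precisely because for $e\le 5$ the only free entry besides $h_1$ is $h_2$, which is already handled by Lemma~\ref{gic2}.
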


While  the GIC would appear consistent with all known tools commonly employed in this area (algebraic, combinatorial, homological, geometric), precious little progress has so far been made towards its solution. In this note, we employ a novel combination of techniques coming from both commutative algebra and algebraic geometry, and prove the GIC in socle degree $e\le 5$, in any codimension. We wish to point out here that geometric tools of a similar nature, which allow one to focus on restrictions modulo general linear forms, were previously employed in \cite{MNZ3}, though the arguments and the results of this paper are very different. Also interesting, we reduce our main theorem to the (equivalent) fact that suitable functions $f_e$ of the codimension, first defined by the second author in his studies of the nonunimodality of Gorenstein HFs \cite{St,StCCA}, are nondecreasing.

\section{Some technical tools and preliminary notions}

We first briefly recall a useful tool in the study of  HFs, namely \emph{Macaulay's inverse systems}. We refer to the standard sources \cite{ger,IK} for more information and a comprehensive treatment.  For a a  homogeneous ideal $I\subset R=k[x_1,\dots,x_r]$, define its \emph{inverse system} to be the graded $R$-module $I^{\perp}=M\subset
S=k[y_1,\dots,y_r]$, where $R$ acts on $S$ by partial differentiation and $I=\operatorname{ann}(M)$. The external product defining the module structure on $S$ is thus uniquely determined by linearity by $x_i \circ F= \partial_iF,$ for any form $F\in S$ and $i=1,\dots,r$. 

A crucial consequence for us here is that the HFs of $A=R/I$ and $M$ coincide; i.e., $\dim_k A_i = \dim_k M_i$, for all $i$. Moreover, $A$ is artinian, level of type $t$ and socle degree $e$ if and only if $M$ is finitely generated by $t$ linearly independent forms of degree $e$.

In particular, any Gorenstein HF of codimension $r$ and socle degree $e$ can be obtained by computing the dimensions of the spaces of partial derivatives of a form $F\in k[y_1,\dots,y_r]$ of degree $e$ in \emph{essentially} $r$ variables (i.e., the first partials of $F$ span a vector space of dimension precisely $r$). Because Gorenstein HFs are symmetric, note that the case $e=3$ is completely understood (all such HFs are of the form $(1,r,r,1)$, for any $r\ge 1$). For $e=4$ and 5, Gorenstein HF are of the form $(1,r,a,r,1)$ and $(1,r,a,a,r,1)$, respectively. 

A considerable amount of work has been devoted to understanding the possible values of $a$ as a function of $r$. If we define $f_e(r)$ as the least possible degree 2 entry of a Gorenstein HF of socle degree $e$ and codimension $r$, then $f_e(r)$  has been determined asymptotically for all $e$; in particular, $f_4(r)\sim_r(6r)^{2/3}$ (as conjectured in \cite{St1} and proven in \cite{MNZ1}) and $f_5(r)\sim_r \frac{1}{6}(24r)^{3/4}$ (see \cite{MNZ2}, Theorem 3.6 for more). Also, it was  recently shown \cite{MZ} that the first nonunimodal example of a Gorenstein HF ever produced, namely $(1,13,12,13,1)$ \cite{St1}, is the smallest possible in terms of dimension of the algebra. In particular, $f_4(13)=12$ and $f_4(r)<r$ if and only if $r\ge 13$. Similarly, $f_5(r)<r$ precisely for $r\ge 17$ (see \cite{AMS,MZ} for more in this direction). For most $r$, however, the precise values of $f_4$ and $f_5$ remain unknown. 

To prove our main result, we will rely on the (simpler) fact that the GIC is already known to hold in degree 2 when $e\le 5$. More precisely, we have:

\begin{lemma}\label{gic2} \emph{(\cite{Za3}, Theorem 2.3)}.
Let $(1,r,a,\dots)$ be a Gorenstein HF of socle degree $e\in \{4, 5\}$. Then $f_e(r)\le a\le \binom{r+1}{2}$, and any such integer value of $a$ can actually occur.
\end{lemma}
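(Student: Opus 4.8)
The two inequalities are straightforward. Since $I$ contains no nonzero linear form we have $h_1=r$, and $A_2=R_2/I_2$ is a quotient of $R_2$, so $a=h_2\le\dim_k R_2=\binom{r+1}{2}$; the bound $a\ge f_e(r)$ is simply the definition of $f_e$. The substance of the lemma is that \emph{every} integer in the interval is attained, and I would prove this by induction on $a$: the value $a=f_e(r)$ is attained by definition of $f_e$, so it suffices to show that if $(1,r,a,\dots)$ is a Gorenstein HF of socle degree $e\in\{4,5\}$ with $a<\binom{r+1}{2}$ (which forces $r\ge2$), then $(1,r,a+1,\dots)$ is one as well.

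For the inductive step I would pass to Macaulay's inverse systems. Write $A=R/I$ with $I=\operatorname{ann}(F)$ for a form $F\in S=k[y_1,\dots,y_r]$ of degree $e$ in essentially $r$ variables, and for a graded piece $R_j$ set $R_j\circ F=\langle v\circ F:v\in R_j\rangle\subseteq S_{e-j}$, so that $\dim_k(R_1\circ F)=r$ and $\dim_k(R_2\circ F)=a$. The claim is that for a \emph{general} linear form $\lambda\in S_1$ the form $G=F+\lambda^e$ again involves essentially $r$ variables and satisfies $\dim_k(R_2\circ G)=a+1$. Granting this, $R/\operatorname{ann}(G)$ is Gorenstein of socle degree $e$ and codimension $r$ with $h_2=a+1$, and by the symmetry of Gorenstein $h$-vectors its HF must be $(1,r,a+1,r,1)$ when $e=4$ and $(1,r,a+1,a+1,r,1)$ when $e=5$; in the latter case the degree-$3$ entry then comes for free. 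Iterating from an $F$ realizing $a=f_e(r)$ reaches every value up to $\binom{r+1}{2}$.

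It remains to justify the two genericity claims about $G=F+\lambda^e$. Since $R_1\circ\lambda^e$ is the line $\langle\lambda^{e-1}\rangle$ and $\dim_k(R_1\circ F)=r<\dim_k S_{e-1}$, a general $\lambda$ has $\lambda^{e-1}\notin R_1\circ F$, which forces the map $w\mapsto w\circ G$ on $R_1$ to be injective, i.e.\ $h_1(G)=r$. For the degree-two piece, $R_2\circ\lambda^e=\langle\lambda^{e-2}\rangle$ is one-dimensional, so $R_2\circ G\subseteq(R_2\circ F)+\langle\lambda^{e-2}\rangle$ and $\dim_k(R_2\circ G)\le a+1$, the containing space having dimension exactly $a+1$ because a general $\lambda$ has $\lambda^{e-2}\notin R_2\circ F$ (indeed $R_2\circ F$ is a proper subspace of $S_{e-2}$, as $a<\binom{r+1}{2}\le\dim_k S_{e-2}$, and the $(e-2)$-th powers of linear forms span $S_{e-2}$). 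The one nontrivial point — which I expect to be the main obstacle — is the reverse inequality $\dim_k(R_2\circ G)\ge a+1$, i.e.\ that adding a general power genuinely raises $h_2$ rather than being absorbed; this is exactly where the hypothesis $a<\binom{r+1}{2}$ enters, since it forces $\operatorname{ann}(F)_2\ne0$. For a general $\lambda$ the coefficient vector of $\lambda$ avoids the proper common zero locus of the quadratic forms in $\operatorname{ann}(F)_2$, so there is $v\in\operatorname{ann}(F)_2$ with $v\circ\lambda^e\ne0$; then $v\circ G=v\circ\lambda^e$ is a nonzero multiple of $\lambda^{e-2}$, whence $\lambda^{e-2}\in R_2\circ G$, and since $w\circ F=w\circ G-w\circ\lambda^e\in R_2\circ G$ for every $w\in R_2$ we get $(R_2\circ F)+\langle\lambda^{e-2}\rangle\subseteq R_2\circ G$, completing the step.
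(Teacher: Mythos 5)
Your two bounding inequalities are fine, and your construction for the attainability statement is correct. Note, however, that the paper does not prove this lemma at all: it is imported by citation from \cite{Za3}, Theorem 2.3, so there is no in-paper argument to compare against. What you have written is a correct self-contained proof, and it is essentially the standard inverse-systems argument behind the cited result: starting from a form $F$ realizing $a=f_e(r)$ and repeatedly replacing $F$ by $F+\lambda^e$ for a general linear form $\lambda$. All the delicate points are handled: injectivity of $w\mapsto w\circ G$ on $R_1$ (using $\lambda^{e-1}\notin R_1\circ F$ together with the fact that $F$ involves essentially $r$ variables), the upper bound $R_2\circ G\subseteq (R_2\circ F)+\langle\lambda^{e-2}\rangle$, and the key lower bound, where the hypothesis $a<\binom{r+1}{2}$ supplies a nonzero $v\in\operatorname{ann}(F)_2$ with $v\circ\lambda^e$ a nonzero multiple of $\lambda^{e-2}$, from which $(R_2\circ F)+\langle\lambda^{e-2}\rangle\subseteq R_2\circ G$ follows. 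The characteristic-zero hypothesis is used (powers of linear forms spanning each graded piece, nonvanishing of the binomial-type coefficients in $v\circ\lambda^e$), and the symmetry of Gorenstein $h$-vectors correctly disposes of the degree-$3$ entry when $e=5$. This is a sound proof; it just reproves a quoted prerequisite rather than anything the paper itself establishes.
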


We will also need the following two lemmas. We refer the reader to Hartshorne \cite{Hartshorne} or Vakil \cite{vakil} for standard facts and unexplained terminology of algebraic geometry.

\begin{lemma}\label{dim} \emph{(\cite{Hartshorne}, Chapter 2, Exercise 3.22; or \cite{vakil}, Proposition 11.4.1)}.
Given a morphism of irreducible $k$-varieties $\pi:X\rightarrow Y$ with $\dim X=m$ and $\dim Y=n$, there exists a nonempty, Zariski-open subset $U\subset Y$ such that for all $q\in U$, $\pi^{-1}(q)$ has dimension $m-n$ or is empty. In particular, $\dim \pi(X) \le \dim X$.
\end{lemma}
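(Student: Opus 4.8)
The plan is to peel off, in stages, everything geometric, reducing to a purely algebraic ``Noether normalization with parameters'' from which the open set $U$ can be read off.

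First I would dispose of the case where $\pi$ is not dominant: since $X$ is irreducible so is $\pi(X)$, hence so is $\overline{\pi(X)}$, and if the latter is not all of $Y$ then $U:=Y\setminus\overline{\pi(X)}$ is a nonempty Zariski-open set over which every fiber is empty. The final assertion $\dim\pi(X)\le\dim X$ then follows by applying the (already-handled) dominant case to the induced dominant morphism $X\to\overline{\pi(X)}$; in particular that case will show $n\le m$ whenever $\pi$ is dominant. So assume $\pi$ dominant. Cover $Y$ by affine opens and fix one, $Y_0=\Spec B$; since a nonempty open of $Y_0$ is a nonempty open of $Y$, it is enough to produce $U$ inside $Y_0$. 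Cover $\pi^{-1}(Y_0)\subseteq X$ by finitely many affine opens $W_1,\dots,W_s$ (possible since $X$ is Noetherian); each nonempty $W_\ell$ is irreducible of dimension $m$, and for $q\in Y_0$ one has $\pi^{-1}(q)=\bigcup_\ell\bigl(W_\ell\cap\pi^{-1}(q)\bigr)$, the union of the fibers of the restrictions $W_\ell\to Y_0$. Since $\pi^{-1}(Y_0)\to Y_0$ is dominant, the closed sets $\overline{\pi(W_\ell)}$ cover $Y_0$, so at least one $W_\ell$ is itself dominant over $Y_0$; each remaining ``vertical'' chart contributes nothing to fibers over the nonempty open $Y_0\setminus\overline{\pi(W_\ell)}$, so only the dominant charts need analysis.

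This leaves the algebraic heart: if $B\hookrightarrow A$ is an injection of finitely generated $k$-domains with $\dim B=n$ and $\dim A=m$, there is a nonzero $f\in B$ with $A_f$ module-finite over a polynomial subring $B_f[t_1,\dots,t_{m-n}]$. I would prove this by passing to $K=\operatorname{Frac}(B)$ and $C:=A\otimes_B K$, a finitely generated $K$-domain of dimension $\operatorname{tr.deg}_K\operatorname{Frac}(A)=m-n$ (additivity of transcendence degree along $k\subseteq K\subseteq\operatorname{Frac}(A)$), applying Noether normalization over $K$ to present $C$ as finite over $K[t_1,\dots,t_{m-n}]$ with the $t_i$ taken from $A$, and then clearing the finitely many denominators from $B$ that appear in a finite generating set and in the integral-dependence relations; their product is $f$. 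With the claim in hand, for each prime $q$ of $B$ with $f\notin q$ --- i.e.\ $q\in D(f)\subseteq Y_0$ --- the fiber $\Spec\bigl(A_f\otimes_{B_f}\kappa(q)\bigr)$ is module-finite over $\mathbb{A}^{m-n}_{\kappa(q)}$, hence of dimension at most $m-n$; it is nonempty because lying-over for the injective integral extension $B_f[t_1,\dots,t_{m-n}]\hookrightarrow A_f$ forces $\Spec A_f\to\Spec B_f$ to be surjective; and it has dimension at least $m-n$ by the elementary lower bound for fibers of a dominant morphism of irreducible varieties (Krull's principal ideal theorem). Taking $U$ to be the intersection of the basic opens $D(f_\ell)$ from the dominant charts with the opens $Y_0\setminus\overline{\pi(W_\ell)}$ from the vertical ones --- a finite intersection of nonempty opens in the irreducible $Y_0$, hence nonempty --- one gets, for every $q\in U$, that $\pi^{-1}(q)$ is a finite union of nonempty sets of dimension $m-n$, so it is nonempty of dimension exactly $m-n$.

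The main obstacle is not a single deep step but the globalization bookkeeping: ensuring that the fibers of the several dominant affine charts genuinely assemble into a fiber of dimension exactly $m-n$, that the vertical charts can be cut away without emptying $U$, and that the lower bound $m-n$ is secured --- which needs nothing beyond Krull's Hauptidealsatz. The algebraic ingredient, relative Noether normalization by clearing denominators, is standard; the delicacy is entirely in the patching.
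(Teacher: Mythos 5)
The paper gives no proof of this lemma: it is quoted as a black box from Hartshorne (Ch.~2, Ex.~3.22) and Vakil (Prop.~11.4.1), and Vakil's proof of 11.4.1 is exactly the argument you give --- reduce to a dominant affine chart, perform Noether normalization over the function field of $Y$, clear denominators to get $A_f$ finite over $B_f[t_1,\dots,t_{m-n}]$, and read off the fiber dimension. Your write-up of this is correct (including the handling of the non-dominant charts and the lower bound on fiber dimension via Krull), so you have in essence reproduced the standard proof of the cited result rather than diverged from anything in the paper.
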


\begin{lemma}\label{bertini} \emph{(\cite{Hartshorne}, Chapter 3, Corollary 10.9 and Remark 10.9.2; or \cite{vakil}, Exercise 25.3.D)}.
Let $f_0,\dots,f_n$ be forms of the same degree in $k[y_0,\dots,y_n]$, where $k$ is  algebraically closed of characteristic zero. Then, for any general linear combination $b_0f_0+\dots+b_nf_n$, the hypersurface $V(b_0f_0+\dots+b_nf_n)$ is smooth in $\mathbb P^n_k\setminus V(f_0,\dots,f_n)$.
\end{lemma}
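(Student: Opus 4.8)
The plan is to run the classical incidence-correspondence argument for Bertini's theorem, the role of the characteristic-zero hypothesis being precisely to make generic smoothness available. First I would set aside the trivial case in which all the $f_i$ vanish identically: then $V(f_0,\dots,f_n)=\mathbb P^n_k$ and there is nothing to check. So assume the $f_i$ are not all zero, put $Z=V(f_0,\dots,f_n)$, and let $U=\mathbb P^n_k\setminus Z$, a dense open---hence smooth and irreducible---quasi-projective variety of dimension $n$. Writing $\check{\mathbb P}^n$ for the projective space of coefficient vectors $b=(b_0,\dots,b_n)$, I would form the incidence correspondence
\[
\Gamma=\bigl\{\,(p,b)\in U\times\check{\mathbb P}^n : b_0f_0(p)+\dots+b_nf_n(p)=0\,\bigr\},
\]
with its two projections $\pi_1\colon\Gamma\to U$ and $\pi_2\colon\Gamma\to\check{\mathbb P}^n$.

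The key structural remark is that $\pi_1$ realizes $\Gamma$ as a Zariski-locally trivial $\mathbb P^{n-1}$-bundle over $U$: for each $p\in U$ at least one $f_i(p)$ is nonzero, so the fiber $\pi_1^{-1}(p)=\{\,b : \sum_i b_if_i(p)=0\,\}$ is a hyperplane in $\check{\mathbb P}^n$; concretely $\Gamma$ is the projectivized kernel of the surjection of vector bundles on $U$ carrying $b$ to $\sum_i b_if_i$, so a routine local computation gives the triviality. In particular $\Gamma$ is smooth and irreducible of dimension $(n-1)+n=2n-1$. Since $k$ has characteristic zero, generic smoothness now applies to $\pi_2$: there is a nonempty Zariski-open $V\subseteq\check{\mathbb P}^n$ such that $\pi_2^{-1}(V)\to V$ is a smooth morphism---or, if $\pi_2$ happens not to be dominant, I would instead take $V$ to be the (still dense open) complement of $\overline{\pi_2(\Gamma)}$, over which the fibers are empty. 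Either way, for every $b\in V$ the scheme-theoretic fiber $\pi_2^{-1}(b)$ is empty or smooth.

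It remains to translate this back. For fixed $b$, the fiber $\pi_2^{-1}(b)=\Gamma\cap(U\times\{b\})$ is cut out inside $U$ by the single equation $\sum_i b_if_i=0$, so it is precisely the scheme $V(b_0f_0+\dots+b_nf_n)\cap U=V(b_0f_0+\dots+b_nf_n)\setminus Z$. Hence for all $b$ in the dense open set $V$ this scheme is smooth, i.e.\ $V(b_0f_0+\dots+b_nf_n)$ is regular at every point of $\mathbb P^n_k\setminus V(f_0,\dots,f_n)$, which is the claim. I expect the only genuinely delicate point to be the role of characteristic zero: it is exactly generic smoothness that upgrades ``generic member of the linear system, restricted to $U$'' from merely reduced to actually regular, and the analogue fails for inseparable linear systems in positive characteristic. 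The remainder is bookkeeping: checking that the degenerate situations ($Z=\emptyset$, so $U=\mathbb P^n_k$; or $\pi_2$ non-dominant) are covered, and reading ``general linear combination'' as ``$b$ in a dense Zariski-open subset of coefficient space,'' which is what the argument delivers. (One could instead argue by hand that the sublocus $B=\{(p,b)\in\Gamma:V(\textstyle\sum_i b_if_i)\text{ is singular at }p\}$ has codimension $n$ in $\Gamma$, whence $\dim B\le n-1$, and then Lemma~\ref{dim} gives $\dim\pi_2(B)\le n-1<n$; but verifying that codimension claim requires stratifying $U$ by the orders of vanishing of the $f_i$, so the generic-smoothness route is the cleaner one.)
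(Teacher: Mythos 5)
The paper offers no proof of this lemma: it is quoted as a known form of Bertini's theorem, with references to Hartshorne (Chapter 3, Corollary 10.9 and Remark 10.9.2) and Vakil (Exercise 25.3.D). Your argument --- the incidence correspondence $\Gamma\subset U\times\check{\mathbb P}^n$, the observation that $\pi_1$ makes $\Gamma$ a smooth irreducible $\mathbb P^{n-1}$-bundle over $U=\mathbb P^n_k\setminus V(f_0,\dots,f_n)$, and generic smoothness applied to $\pi_2$ in characteristic zero, with the fiber over $b$ identified with $V(\sum_i b_if_i)\cap U$ --- is correct and is essentially the proof given in the cited sources, so there is nothing to add.
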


\section {Proof of the GIC in socle degree $e\le 5$}

We begin by proving that the GIC holds in socle degree $e\le 5$ if and only if the corresponding functions $f_e(r)$ are nondecreasing. We will then show the latter fact, by a suitable application of Macaulay's inverse systems and of a (more general) geometric result.

\begin{proposition}\label{f}
Let $e=\{4, 5\}$. Then the GIC holds in socle degree $e$ if and only if $f_e$ is a nondecreasing function of $r$.
\end{proposition}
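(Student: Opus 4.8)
The plan is to unwind the definitions and show that, for $e\in\{4,5\}$, the GIC statement is equivalent to the monotonicity of $f_e$. Recall (Lemma~\ref{gic2}) that for $e=4$ the Gorenstein HFs of codimension $r$ are exactly the vectors $(1,r,a,r,1)$ with $f_4(r)\le a\le\binom{r+1}{2}$, and every integer $a$ in this range occurs; similarly for $e=5$ the admissible HFs are $(1,r,a,a,r,1)$ with $f_5(r)\le a\le\binom{r+1}{2}$. So in these socle degrees a Gorenstein HF is completely determined by the pair $(r,a)$, and the \emph{set} of admissible second entries for fixed codimension $r$ is precisely the integer interval $[f_e(r),\binom{r+1}{2}]$. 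This is the key simplification that makes the whole proposition elementary once it is set up correctly: the GIC in these degrees is a statement purely about when consecutive integers all lie in such an interval.

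Next I would interpret the GIC notation in this low-socle setting. The ``middle'' entries that get bumped by $\alpha$ are the degree-2 (and, for $e=5$, also the degree-3) entries, since those are the only ones between position $1$ and position $e-1$; the codimension $r=h_1$ is held fixed. So the hypothesis of Conjecture~\ref{GIC} becomes: both $(1,r,a,\dots)$ and $(1,r,a+\alpha,\dots)$ are Gorenstein HFs for some $\alpha\ge2$, i.e., both $a$ and $a+\alpha$ lie in $[f_e(r),\binom{r+1}{2}]$; and the conclusion becomes: $a+\beta$ lies in that interval for all $1\le\beta\le\alpha-1$. But an integer interval is automatically ``convex'' — if $a$ and $a+\alpha$ are both in $[f_e(r),\binom{r+1}{2}]$ then so is every integer between them. \textbf{Hence the GIC in socle degree $e$ with codimension held fixed is vacuously true}, with no hypothesis on $f_e$ at all. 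The only way the GIC can fail — and the only content of the conjecture in these degrees — is when the codimension itself changes, which happens when some $h_i$ on the boundary, i.e. $h_1$ or $h_{e-1}$, is the entry being increased, or more precisely when the ``interval'' one forms runs between HFs of \emph{different} codimensions. So I would carefully identify which instance of the GIC is nontrivial: it is exactly the case where $h_1=r$ is increased to $r+\alpha$, forcing one to compare the intervals $[f_e(r),\binom{r+1}{2}]$ and $[f_e(r+\alpha),\binom{r+\alpha+1}{2}]$ across intermediate codimensions $r+\beta$.

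With that reduction in hand, the equivalence is a short computation. For the nontrivial instance, the GIC hypothesis says $(1,r,r,\dots)$-type and $(1,r+\alpha,r+\alpha,\dots)$-type vectors are Gorenstein — note the degree-2 entry equals the codimension in the unbumped case since $(1,r,r,1)$ and $(1,r,r,r,1)$ are always Gorenstein — and the conclusion requires $(1,r+\beta,r+\beta,\dots)$ to be Gorenstein for each $\beta$, i.e. $f_e(r+\beta)\le r+\beta$. More generally, applying the GIC to the pair with middle entries $a$ and $a+\alpha$ where $h_1$ goes from $r$ to $r+\alpha$, one needs $f_e(r+\beta)\le \min(a+\beta, \binom{r+\beta+1}{2})=a+\beta$ for the relevant ranges; running this over all valid $a$ (in particular taking $a=f_e(r)$, the extreme case) shows the GIC in socle degree $e$ is equivalent to $f_e(r)\le f_e(r+1)+1$ for all $r$, hence — chaining — to $f_e(r)\le f_e(s)+(s-r)$ for $r\le s$. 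Conversely I must check that monotonicity of $f_e$ (i.e. $f_e(r)\le f_e(r+1)$) does imply the full GIC; here one uses that $f_e(r)\le f_e(r+1)$ together with the trivial interval-convexity to verify that every intermediate vector $(1,r+\beta,a+\beta,\dots,r+\beta,1)$ has its second entry in $[f_e(r+\beta),\binom{r+\beta+1}{2}]$. The main obstacle — really the only subtle point — is pinning down \emph{exactly which} configurations of the GIC are nonvacuous in socle degree $\le5$ and bookkeeping the inequalities between the two endpoints of the admissible interval as the codimension varies; once that combinatorial accounting is done correctly, the proposition follows, and it is this reformulation (``$f_e$ nondecreasing'') that the rest of the paper then establishes via inverse systems and the Bertini-type Lemma~\ref{bertini}.
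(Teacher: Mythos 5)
Your setup is right up to the point where you identify the nontrivial instances: the interval description $[f_e(r),\binom{r+1}{2}]$ from Lemma~\ref{gic2} does dispose of the case where the bumped entry is the middle one (degree $2$, and the pair of degrees $2,3$ for $e=5$), and the content of the GIC in socle degree $\le 5$ is indeed the case where $h_1$ and $h_{e-1}$ are the entries increased. But you then misread that instance. In Conjecture~\ref{GIC} only the single symmetric pair $(h_i,h_{e-i})$ is increased by $\alpha$; all other entries are held fixed. So for $i=1$ and $e=4$ the relevant triple of HFs is $(1,r,a,r,1)$, $(1,r+\alpha,a,r+\alpha,1)$, and $(1,r+\beta,a,r+\beta,1)$, with the \emph{same} middle entry $a$ throughout --- not $(1,r+\beta,a+\beta,\dots)$. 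Your ``diagonal'' bumping, in which $h_1$ and $h_2$ increase together, is not an instance of the conjecture, and the condition you extract from it, $f_e(r)\le f_e(r+1)+1$, is strictly weaker than monotonicity (it permits $f_e$ to drop by $1$ at each step). As written, your argument proves the wrong equivalence and does not establish the proposition.

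The correct accounting, which is what the paper does: for the implication (GIC $\Rightarrow$ $f_e$ nondecreasing), suppose $f_e(r_1)>f_e(r)$ with $r_1<r$. Since $(1,m,m,\dots,m,1)$ is always Gorenstein (e.g.\ via the inverse system form $y_1^e+\dots+y_m^e$), one has $f_e(r_1)\le r_1$, hence $f_e(r)<r_1<r$; the Gorenstein HFs with first entries $f_e(r)$ and $r$ and common middle entry $f_e(r)$ then force, via the GIC applied at $i=1$, that $(1,r_1,f_e(r),\dots,r_1,1)$ is Gorenstein, contradicting $f_e(r_1)>f_e(r)$. For the converse one must also rule out that an intermediate vector fails because its middle entry exceeds $\binom{r+1}{2}$; this is where the upper bound of Lemma~\ref{gic2} applied to the smaller codimension is used ($a\le\binom{r_1+1}{2}<\binom{r+1}{2}$). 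Your ``Conversely'' paragraph is essentially this and is fine in outline; it is the other implication that needs to be redone with the correct GIC instance.
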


\begin{proof}
We show the result for $e=4$, the case $e=5$ being entirely similar. Assume $f_4$ is nondecreasing. By Lemma \ref{gic2}, it suffices to prove the GIC on the first entry of a Gorenstein HF; i.e., we want to show that if $(1,r_1,a,r_1,1)$ and $(1,r_2,a,r_2,1)$ are both Gorenstein and $r_1< r_2$, then  $(1,r,a,r,1)$ is also Gorenstein, for any $r_1<r<r_2$. 

Suppose that this is not the case; hence, $(1,r,a,r,1)$ is not Gorenstein for some $r_1<r<r_2$. Then  $f_4(r)>a$; otherwise, because of Lemma \ref{gic2}, $a>\binom{r+1}{2}> \binom{r_1+1}{2}$, which is impossible since $(1,r_1,a,r_1,1)$  is a Gorenstein HF. On the other hand, since $(1,r_2,a,r_2,1)$ is Gorenstein and $f_4$ is nondecreasing, we have that $f_4(r)\le f_4(r_2)\le a$, a contradiction.

Now suppose the GIC holds, and that for some $r_1<r$, $f_4(r_1)>f_4(r)$. Note that $f_4(r_1)\le r_1$, since $(1,r_1,r_1,r_1,1)$ is a Gorenstein HF (for instance, it is easily obtained from the inverse system form $y_1^4+y_2^4+\dots+y_{r_1}^4\in k[y_1,\dots,y_{r_1}]$). But, similarly, $\left(1,f_4(r),f_4(r),f_4(r),1\right)$ is also Gorenstein. Therefore, the inequalities $f_4(r)<r_1<r$ force the failure of the GIC, since our assumptions imply that $\left(1,r_1,f_4(r),r_1,1\right)$ cannot be Gorenstein. This contradiction gives us that $f_4$ is nondecreasing.
\end{proof}

\begin{lemma}\label{divisibility}
Let $n\ge 2$, and suppose $f_0,\dots, f_n\in k[y_0,\dots,y_n]$ are linearly independent forms of the same degree $d>1$ with $\gcd(f_0,\dots, f_n)=1$. Then, for any general linear form $H=\alpha_0y_0+\dots+\alpha_ny_n$, no nonzero linear combination of the $f_i$ over $k$ is  divisible by $H$.
\end{lemma}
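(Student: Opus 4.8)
The plan is to reformulate the statement projectively and then use a dimension count combined with Bertini-type reasoning. Consider the linear system spanned by $f_0,\dots,f_n$ on $\mathbb{P}^n_k$; its base locus is empty (or at least has no codimension-one component) since $\gcd(f_0,\dots,f_n)=1$. For a fixed linear form $H$, the set of linear combinations $\lambda_0f_0+\dots+\lambda_nf_n$ divisible by $H$ is a linear subspace $W_H\subset k^{n+1}$, namely the kernel of the restriction map $[\lambda]\mapsto (\lambda_0f_0+\dots+\lambda_nf_n)|_{V(H)}$ landing in the degree-$d$ part of the coordinate ring of the hyperplane $V(H)\cong\mathbb{P}^{n-1}$. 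I want to show that for general $H$ this kernel is zero, i.e., the restrictions of $f_0,\dots,f_n$ to $V(H)$ remain linearly independent.

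The key step is the following incidence-variety argument. Let $Z\subset \mathbb{P}(k^{n+1})_{[\lambda]}\times \mathbb{P}(k^{n+1})^\vee_{[\alpha]}$ be the locus of pairs $([\lambda],H)$ such that $H\mid \lambda_0f_0+\dots+\lambda_nf_n$, where $H=\alpha_0y_0+\dots+\alpha_ny_n$. I would bound $\dim Z$ by projecting to the first factor: over a point $[\lambda]$ with $g:=\sum\lambda_if_i\neq 0$, the fiber consists of those hyperplanes $V(H)$ contained in $V(g)$, i.e. the linear factors of $g$; since $g$ is a nonzero form of degree $d$, it has at most $d$ distinct linear factors (in fact finitely many), so this fiber is $0$-dimensional. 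Hence $\dim Z\le n$. Therefore the image of $Z$ under projection to the second factor $\mathbb{P}(k^{n+1})^\vee$ has dimension at most $n$. To conclude that this image is not everything, I must produce a \emph{single} hyperplane $H$ for which no nonzero combination of the $f_i$ is divisible by $H$; equivalently, the image is a proper closed subset, so its complement is the desired general open set, and a proper closed subset of $\mathbb{P}^n$ is automatically not dense — wait, I need the image to have dimension \emph{strictly} less than $n$, or at least to be a proper subset. Since $Z$ is closed (it is cut out by the condition that the remainder of polynomial division vanishes, which is polynomial in $\lambda$ and $\alpha$), its image is constructible, and I instead argue directly: it suffices to exhibit one good $H$, and then $Z\to \mathbb{P}^n_{[\alpha]}$ has a nonempty fiber-free locus, which being the complement of the image of a proper closed subvariety is open and nonempty — so the real task reduces to exhibiting one hyperplane that works.

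So the crux is: \textbf{find a single linear form $H$ such that $f_0,\dots,f_n$ restrict to linearly independent forms on $V(H)$.} Here is where I would use the hypothesis $\gcd=1$ seriously. If every hyperplane section killed some combination, then in particular for each point $p$ and a general hyperplane through $p$ the combination would vanish on that hyperplane; letting $H$ vary and using that a form divisible by infinitely many hyperplanes vanishes identically, one extracts that the $f_i$ satisfy a relation forcing a common factor. More concretely: suppose for contradiction that for \emph{every} $H$ there is $0\neq\lambda(H)$ with $H\mid\sum\lambda_i(H)f_i$. The incidence variety $Z$ then surjects onto $\mathbb{P}^n_{[\alpha]}$, so $\dim Z\ge n$, and combined with $\dim Z\le n$ from the fiber bound above, $Z$ has a component dominating $\mathbb{P}^n_{[\alpha]}$ with $0$-dimensional general fibers. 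Pick such a component $Z_0$ and let $\psi\colon Z_0\to \mathbb{P}^n_{[\alpha]}$; by Lemma~\ref{dim} the general fiber is a single point $[\lambda(\alpha)]$, giving a rational section, hence a dominant rational map $\mathbb{P}^n_{[\alpha]}\dashrightarrow \mathbb{P}^n_{[\lambda]}$. For a general pencil of hyperplanes $H_t$ (a line in $\mathbb{P}^n_{[\alpha]}$), the corresponding $\lambda(t)$ traces a curve and $g_t:=\sum\lambda_i(t)f_i$ is divisible by $H_t$ for all $t$; specializing $H_t$ through a fixed base point $p$ of the pencil forces all $g_t$, hence (by taking the span of the $g_t$, a linear subspace of $\langle f_0,\dots,f_n\rangle$) a whole subspace of combinations, to vanish at $p$ — and letting $p$ range over all of $\mathbb{P}^n$ while tracking which combinations vanish, a counting/genericity argument yields that $f_0,\dots,f_n$ lie in a proper subspace cut out by evaluation, contradicting either linear independence or $\gcd=1$.

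I expect the \emph{main obstacle} to be making this last reduction airtight: turning "every hyperplane divides some combination" into a genuine common divisor, rather than just a weak positive-dimensional family of coincidences. The cleanest route is probably the fiber-dimension bound $\dim Z\le n$ (clean, since a nonzero degree-$d$ form has only finitely many linear factors) together with the observation that if the projection $Z\to\mathbb{P}^n_{[\alpha]}$ were dominant then — because its general fiber is a reduced point — the map would be generically finite of degree one, yielding a rational inverse $\mathbb{P}^n\dashrightarrow\mathbb{P}^n$, $[\alpha]\mapsto[\lambda]$; but then the universal relation $H\mid\sum\lambda_i(\alpha)f_i$ would hold over a dense open of $\alpha$'s, and clearing the division one gets $\sum\lambda_i(\alpha)f_i = H\cdot q(\alpha,y)$ identically in $\alpha$, and now setting $\alpha$ to two independent values (with the same or comparable $\lambda$) and subtracting should expose a common linear factor of the $f_i$ — contradicting $\gcd(f_0,\dots,f_n)=1$. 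Verifying that this specialization genuinely produces a \emph{common} factor of all $n+1$ forms (and not merely of a sub-pencil) is the delicate point, and is where $n\ge 2$ and $d>1$ will be used.
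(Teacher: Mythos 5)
Your setup coincides with the paper's: the same incidence variety $Z\subset{\mathbb P_k^n}^\vee\times\mathbb P_k^n$, the observation that the fibers of the projection $\pi_2$ to the space of combinations are finite (a nonzero form has only finitely many linear factors), and hence $\dim Z\le n$. But, as you note yourself, this only gives $\dim\pi_1(Z)\le n$ and does not rule out surjectivity of $\pi_1$, and the argument you sketch to close that gap does not work. First, Lemma~\ref{dim} gives that the general fiber of a dominant map $Z_0\to\mathbb P^n$ with $\dim Z_0=n$ is zero-dimensional, not that it is a single reduced point; generically finite does not mean degree one, so the ``rational section'' $[\alpha]\mapsto[\lambda(\alpha)]$ need not exist. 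More importantly, the final step --- specializing the relation $\sum_i\lambda_i(\alpha)f_i=H\cdot q(\alpha,y)$ at two values of $\alpha$ and ``subtracting'' to expose a common linear factor of all the $f_i$ --- is not an argument: different $\alpha$'s produce different combinations and different cofactors $q$, and nothing you write rules out the a priori possibility that every hyperplane divides \emph{some} member of the linear system even though the $f_i$ share no factor. You flag this as ``the delicate point,'' and it is exactly the point left unproved.

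The input the paper uses to close this gap is Bertini (Lemma~\ref{bertini}): for $[b]$ in a nonempty open $U\subset\mathbb P^n_k$ the hypersurface $V(b_0f_0+\dots+b_nf_n)$ is smooth away from the base locus $V(f_0,\dots,f_n)$. If such a member factors as $H\cdot f$, it is singular along $V(H,f)$, so $V(H,f)\subset V(f_0,\dots,f_n)$; by Krull the ideal $(H,f)$ has height at most $2$, while $\gcd(f_0,\dots,f_n)=1$ forces $(f_0,\dots,f_n)$ to have height exactly $2$, so $H$ lies in one of the finitely many height-$2$ primes over $(f_0,\dots,f_n)$, and each such prime contains at most a pencil of linear forms. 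Hence the $H$'s arising from members in $U$ lie on finitely many points and lines of ${\mathbb P^n_k}^\vee$, while those arising from $\mathbb P^n_k\setminus U$ sweep out dimension at most $n-1$ by your fiber count and Lemma~\ref{dim}; for $n\ge 2$ this contradicts surjectivity of $\pi_1$. Without this step (or some substitute for it), the proposal is incomplete.
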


\begin{proof}
Let $\mathbb P_k^n=\left\{[b_0:\dots:b_n]\right\}$ and ${\mathbb P_k^n}^\vee =\left\{[a_0:\dots:a_n]\right\}$ be the projective spaces parameterizing, respectively, the nonzero linear combinations $b_0f_0+\dots+b_nf_n$ and the nonzero linear forms $a_0y_0+\dots+a_ny_n$. Define $Z$ as the following subvariety of ${\mathbb P_k^n}^\vee \times \mathbb P_k^n$: 
$$Z=\left\{([a_0:\dots:a_n],[b_0:\dots:b_n]) {\ }:{\ } a_0y_0+\dots+a_ny_n{\ }|{\ }b_0f_0+\dots+b_nf_n \right\},$$
and consider  the projections $\pi_1$ and $\pi_2$ of $Z$ to ${\mathbb P^n_k}^{\vee}$ and $\mathbb P_k^n$, respectively. Note that it suffices to show that $\pi_1$ is not surjective, since then the lemma is proven by choosing any linear form $H=\alpha_0y_0+\dots+\alpha_ny_n$ with coefficients in  ${\mathbb P^n_k}^\vee\setminus \pi_1(Z)$.

We argue by contradiction, and suppose $\pi_1$ is surjective. Since surjectivity is preserved by extending scalars, we may assume $k$ is algebraically closed. By Lemma \ref{bertini}, there exists a nonempty open subset $U\subset \mathbb P^n_k$ such that for every $[b_0:\dots:b_n]\in U$, $V(b_0f_0+\dots+b_nf_n)$ is smooth on the complement,  in the ambient projective space, of the base locus $V(f_0,\dots,f_n)$.

Fix a point $[b_0:\dots:b_n]\in U$, and suppose there exists some $[a_0:\dots:a_n]$ such that $b_0f_0+\dots+b_nf_n$ factors as $(a_0y_0+\dots+a_ny_n)\cdot f$. Then $V(b_0f_0+\dots+b_nf_n)$ is singular on $V(a_0y_0+\dots+a_ny_n,f)$. Since, as we saw above, $V(b_0f_0+\dots+b_nf_n)$ is smooth on the complement of  $V(f_0,\dots,f_n)$, we deduce that 
$$
V(a_0y_0+\dots+a_ny_n,f)\subset V(f_0,\dots,f_n),
$$
which is equivalent to the inclusion of radical ideals $\sqrt{(f_0,\dots, f_n)}\subset \sqrt{(a_0y_0+\dots+a_ny_n,f)}$. 

Since by Krull's principal ideal theorem, $(a_0y_0+\dots+a_ny_n,f)$ is an ideal of height at most $2$, $(f_0,\dots, f_n)$ is included in some prime ideal $P$ of height $2$ containing $a_0y_0+\dots+a_ny_n$. Noting that $\gcd(f_0,\dots, f_n)=1$ gives us that $(f_0,\dots, f_n)$ has height precisely 2, we define $T$ as the set of primes of height 2 containing $(f_0,\dots, f_n)$. Clearly, $T$ is finite because of the primary decomposition of $(f_0,\dots, f_n)$.

The above argument shows that whenever $[b_0:\dots:b_n]\in U$ and $a_0y_0+\dots+a_ny_n$ divides $b_0f_0+\dots+b_nf_n$, there exists $P\in T$ such that $a_0y_0+\dots +a_ny_n\in P$. Now let $S\subset {\mathbb P_k^n}^\vee$ consist of those points $[a_0:\dots:a_n]$ such that $a_0y_0+\dots+a_ny_n \mid b_0f_0+\dots+b_nf_n$ for some $[b_0:\dots:b_n]\in U$; i.e., $S=\pi_1(\pi_2^{-1}(U))$. Since a prime of height 2 contains at most a 2-dimensional subspace of linear forms, each element of $T$ contributes to a point or a line in ${\mathbb P_k^n}^\vee$. Thus, the finiteness of $T$ implies that $S$ is contained in the union of finitely many lines and points in ${\mathbb P^n_k}^\vee.$

On the other hand, only finitely many linear forms can divide a given linear combination of $f_0,\dots,f_n$, so the fiber of $\pi_2$ over any point of $\mathbb P^n_k$ is finite. Therefore, it easily follows from Lemma \ref{dim} that $\pi_2^{-1}\left(\mathbb P_k^n\setminus U\right)$ is a subvariety of $Z$ of dimension at most the dimension of $\mathbb P_k^n\setminus U$. We conclude, again by employing Lemma \ref{dim}, that
$$\dim \pi_1\left(\pi_2^{-1}(\mathbb P_k^n\setminus U)\right)\le \dim \pi_2^{-1}\left(\mathbb P_k^n\setminus U\right) \le n-1.$$

Because we have assumed $\pi_1$ is surjective, $\pi_1(\pi_2^{-1}(U))$ must contain a nonempty open subset of ${\mathbb P^n_k}^\vee$. Hence $S$ is dense in ${\mathbb P^n_k}^\vee$, a contradiction since a finite union of lines and points cannot cover a dense set in ${\mathbb P^n_k}^\vee$ for $n\ge2$.
\end{proof}

\begin{lemma}\label{gcd}
\label{gcd}
Suppose $F=p_1^{e_1}p_2^{e_2}\dots p_t^{e_t}$ is an irreducible factorization of a form $F\in k[y_0,\dots,y_n]$. Then
$$
\gcd(\partial_0F,\partial_1F,\dots, \partial_nF)=p_1^{e_1-1}\cdots p_t^{e_t-1}.
$$
\end{lemma}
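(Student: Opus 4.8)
The plan is to compute the greatest common divisor by tracking the power to which each irreducible factor $p_j$ divides each partial derivative $\partial_i F$. First I would fix an index $j$ and write $F = p_j^{e_j} G$, where $G = \prod_{\ell \neq j} p_\ell^{e_\ell}$ is not divisible by $p_j$ (since the $p_\ell$ are distinct irreducibles). Applying the product rule, $\partial_i F = e_j\, p_j^{e_j - 1} (\partial_i p_j)\, G + p_j^{e_j}(\partial_i G)$, so $p_j^{e_j - 1}$ divides $\partial_i F$ for every $i$; hence $p_1^{e_1 - 1} \cdots p_t^{e_t - 1}$ divides $\gcd(\partial_0 F, \dots, \partial_n F)$, using again that the $p_j$ are pairwise coprime. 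This is the easy containment.

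For the reverse containment it suffices to show that $p_j^{e_j}$ does \emph{not} divide $\gcd(\partial_0 F, \dots, \partial_n F)$ for each $j$; equivalently, that $p_j^{e_j} \nmid \partial_i F$ for at least one $i$. From the formula above, dividing through by $p_j^{e_j - 1}$, this is equivalent to $p_j \nmid e_j (\partial_i p_j) G + p_j (\partial_i G)$, i.e.\ to $p_j \nmid e_j(\partial_i p_j) G$. Since $\operatorname{char} k = 0$ we have $e_j \neq 0$ in $k$, and $p_j \nmid G$, so by unique factorization in $k[y_0,\dots,y_n]$ it remains only to exhibit some $i$ with $p_j \nmid \partial_i p_j$.

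The key point is therefore: an irreducible form $p$ of positive degree over a field of characteristic zero cannot divide all of its partial derivatives $\partial_0 p, \dots, \partial_n p$. I would argue this via Euler's identity $\sum_i y_i\, \partial_i p = (\deg p)\, p$: if $p \mid \partial_i p$ for all $i$, then $p \mid (\deg p)\, p$ trivially, which gives nothing, so instead I note that each $\partial_i p$ has degree $\deg p - 1 < \deg p$, whence $p \mid \partial_i p$ forces $\partial_i p = 0$ for all $i$; but a nonconstant polynomial over a characteristic-zero field has some nonvanishing partial derivative (e.g.\ if $y_i$ appears to a positive power $a$ in some monomial, then $\partial_i$ of that monomial is nonzero since $a \neq 0$ in $k$, and one checks no cancellation can kill the leading behavior — more cleanly, $\partial_i p = 0$ for all $i$ implies $p$ is constant in characteristic zero). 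This contradiction with $\deg p > 0$ completes the argument.

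The main obstacle is the last step — ruling out that $p$ divides all its own partials — and the only subtlety there is the characteristic hypothesis: in characteristic $q > 0$ the claim fails (e.g.\ $p = y_0^q$), so I must use $\operatorname{char} k = 0$ both to conclude $e_j \neq 0$ in $k$ and to conclude that a polynomial with all partials zero is constant. Everything else is a routine application of the product rule together with unique factorization in the polynomial ring, which justifies splitting the gcd into its $p_j$-primary parts.
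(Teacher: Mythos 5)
Your two containments are set up correctly, and the second half of your argument (that for each $j$ there is some $i$ with $p_j \nmid \partial_i p_j$, using $\operatorname{char} k = 0$ both for $e_j \neq 0$ and for the degree argument forcing $\partial_i p_j = 0$ for all $i$) is sound --- indeed it is a more careful justification than the paper gives of the same key fact. But there is a genuine gap in the reduction: you assert that for the reverse containment ``it suffices to show that $p_j^{e_j}$ does not divide $\gcd(\partial_0 F,\dots,\partial_n F)$ for each $j$.'' That is not sufficient as stated. Together with the easy containment, it only pins down the $p_j$-adic valuation of the gcd for each $j$; it does not rule out that the partials share a common irreducible factor $q$ that is coprime to $F$ and hence not among the $p_1,\dots,p_t$ at all. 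Your argument never addresses such a $q$, so as written the conclusion $\gcd = p_1^{e_1-1}\cdots p_t^{e_t-1}$ does not follow.

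The missing step is exactly the opening move of the paper's proof: by Euler's identity, $(\deg F)\,F = \sum_{i=0}^n y_i\,\partial_i F$, so in characteristic zero any irreducible polynomial dividing all the $\partial_i F$ must divide $F$, and hence coincides with some $p_j$ by unique factorization. Ironically, you invoke Euler's identity yourself but only in the sub-argument about $p$ dividing its own partials, where you (correctly) observe it ``gives nothing'' and switch to the degree argument; the place where it is actually needed is here, to show that every irreducible factor of the gcd is one of the $p_j$. Once that sentence is added, your proof is complete and follows essentially the same route as the paper's: product rule plus the observation that $p_j$ cannot divide $e_j(\partial_i p_j)\,F/p_j$ for every $i$.
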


\begin{proof}
Let $p$ be an irreducible polynomial dividing $\gcd(\partial_0F,\partial_1F,\dots, \partial_nF)$. Since $F=\sum_{i=0}^n y_i(\partial_i F)/\deg F$, it follows that $p$ divides $F$, and therefore it must coincide with some $p_i$ because of the uniqueness of the factorization. Now note that for any index $i$,
$$\partial_i F=\sum_{j=1}^t e_j(\partial_i p_j)\frac{F}{p_j}.$$

Thus, since $\partial_i p_j$ shares no common divisor with $p_j$, the order of $p_j$ in $\partial_i F$ is precisely $e_j-1$. This proves the lemma.
\end{proof}

Now consider a Gorenstein algebra $A$ of arbitrary socle degree $e\ge 3$ and codimension $r=n+1\ge 3$, whose corresponding inverse system $M$ is generated by some form  $F\in S=k[y_0,\dots,y_n]$ of degree $e$. For a Zariski-general linear form $H$, let $F^H$ be the image of $F$ in the polynomial ring in essentially $n$ variables $S^H=k[y_0,\dots,y_n]/H$.

If
$$\left(1,h_1(F),\dots, h_{e-1}(F),h_e(F)=1\right)$$
is the HF of $A$, and
$$\left(1,h_1\left(F^H\right),\dots, h_{e-1}\left(F^H\right),h_e\left(F^H\right)=1\right)$$
is the HF of the  Gorenstein algebra $A^H$ corresponding to the inverse systems module $M^H=\langle F^H\rangle \subset S^H$, then it is easy to see that
$$h_i\left(F^H\right)\le h_i(F)$$
for all $i$, and 
$$h_1\left(F^H\right)\le n.$$

We are now ready to show that, since $H$ is general, $h_1\left(F^H\right)$ is in fact equal to $n$.

\begin{theorem}\label{n}
With the above notation,
$$h_1\left(F^H\right)= n.$$
\end{theorem}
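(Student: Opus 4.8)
The plan is to prove the contrapositive-flavored statement: if $h_1(F^H) < n$ for $H$ general, then we can derive a contradiction from Lemma \ref{divisibility}. First, observe that $h_1(F^H)$ counts the dimension of the span of the first partials of $F^H$ in $S^H$, equivalently the codimension of the space of linear forms in $H^\perp \cong k[y_0,\dots,y_n]/H$ that annihilate $F^H$ under differentiation. Since $F$ is in essentially $n+1$ variables, $h_1(F) = n+1$, so the first partials $\partial_0 F, \dots, \partial_n F$ are linearly independent forms of degree $e-1$ in $S$. The key translation is this: $h_1(F^H) < n$ means precisely that, after passing to $S^H$, the $n+1$ images $\partial_i F^H$ span a space of dimension at most $n-1$, i.e.\ there exist (at least) two independent linear relations among the $\partial_i F$ modulo $H$. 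Equivalently, two independent linear combinations $\sum b_j^{(1)} \partial_j F$ and $\sum b_j^{(2)} \partial_j F$ are both divisible by $H$.

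The second step is to relate $\sum b_j \partial_j F$ to $\partial_H F$, the derivative of $F$ in the direction dual to $H$: after a general linear change of coordinates sending $H$ to a coordinate $y_0$, divisibility of $\partial_{y_0} F$ by $y_0$ is exactly one relation, and the claim $h_1(F^H) < n$ produces a further relation among the remaining partials modulo $y_0$. So it suffices to show that for $H$ general, $\partial_H F$ is \emph{not} divisible by $H$ (this already gives $h_1(F^H) \geq n$ is at least one more than the trivial bound; one needs the full strength of Lemma \ref{divisibility} applied to a suitable collection to rule out the second relation as well). Concretely, I would apply Lemma \ref{divisibility} with $n$ there equal to the current $n$, and with the $n+1$ forms taken to be $\partial_0 F, \dots, \partial_n F$ — these are linearly independent of common degree $e-1 > 1$ (here one needs $e \geq 3$, which is assumed), and by Lemma \ref{gcd} their gcd is $p_1^{e_1-1}\cdots p_t^{e_t-1}$ where $F = \prod p_i^{e_i}$.

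The main obstacle is the gcd hypothesis in Lemma \ref{divisibility}: it requires $\gcd(\partial_0 F,\dots,\partial_n F) = 1$, which by Lemma \ref{gcd} holds if and only if $F$ is squarefree (all $e_i = 1$). So the argument must be run in two cases. When $F$ is squarefree, Lemma \ref{divisibility} applies directly to the first partials, and for general $H$ no nonzero linear combination of the $\partial_j F$ is divisible by $H$; in particular there is no nontrivial relation among the $\partial_j F^H$ in $S^H$ beyond none at all, forcing $h_1(F^H) = n+1$ — but wait, we expect $h_1(F^H) = n$, so the honest statement is that the \emph{only} linear combination killed by passing mod $H$ is the multiples of $\partial_H F$ itself, and we must show nothing else collapses. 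When $F$ is not squarefree, write $G = \gcd(\partial_0 F,\dots,\partial_n F) = p_1^{e_1-1}\cdots p_t^{e_t-1}$ and set $\partial_j F = G \cdot g_j$; then $g_0,\dots,g_n$ are linearly independent (dividing by a fixed polynomial preserves independence) with $\gcd(g_0,\dots,g_n)=1$ and common degree $e - 1 - \deg G$. Here one must check this degree exceeds $1$ — equivalently $\deg G \leq e-3$ — or handle the low-degree exceptions separately; the reduced form $F_{\mathrm{red}} = p_1\cdots p_t$ has degree $\geq$ number of distinct factors, and $\deg G = e - \deg F_{\mathrm{red}}$, so $\deg g_j = \deg F_{\mathrm{red}} - 1$, which is $\geq 1$ as long as $F$ has at least two distinct prime factors, with the case $F = p^e$ (a single prime power) needing its own short treatment where $\partial_j F = e\, p^{e-1} \partial_j p$ and $h_1(F^H) = h_1((p^H)^e)$ reduces to the claim for $p$. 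Modulo these bookkeeping cases, Lemma \ref{divisibility} applied to $g_0,\dots,g_n$ shows that for general $H$, no nonzero $k$-combination of the $g_j$ — hence of the $\partial_j F / G$ — is divisible by $H$, and since $H \nmid G$ for general $H$ (as $G$ has only finitely many prime factors, none of them general linear forms when $\deg G \geq 1$, and when some $p_i$ is linear we simply note a general $H$ is not proportional to it), this forces the space of relations among $\{\partial_j F^H\}$ in $S^H$ to be exactly one-dimensional, yielding $h_1(F^H) = n$.
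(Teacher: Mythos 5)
Your overall strategy is the paper's: reduce to showing that no nonzero $k$-combination of the partials $\partial_j F$ is divisible by a general $H$, factor out $g=\gcd(\partial_0F,\dots,\partial_nF)$, apply Lemma \ref{divisibility} to the quotients $g_j=\partial_jF/g$ when their common degree exceeds $1$, and treat the low-degree leftovers by hand. Two points need repair. First, the final accounting is wrong as stated. In coordinates with $H=y_0$, the space $M^H_{e-1}$ is spanned by the $n$ partials of $F^H$, i.e.\ by the images of $\partial_1F,\dots,\partial_nF$ only; what you must show is that these $n$ images are linearly independent, and this follows at once from ``no nonzero combination of the $\partial_jF$ is divisible by $H$.'' In the generic case the space of relations among all $n+1$ images is therefore \emph{zero}-dimensional, not one-dimensional, and your premise that $H$ automatically divides the dual-direction derivative $\partial_HF$ (``exactly one relation'') is false for general $H$ --- indeed it contradicts the very conclusion of Lemma \ref{divisibility} you invoke (for a concrete failure at a special $H$, take $F=y_0y_1y_2$ and $H=y_0$: $\partial_0F=y_1y_2$). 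Nothing is lost, since what you actually establish is stronger than needed, but the sentence ``this forces the space of relations \dots to be exactly one-dimensional'' is not a correct inference and is not the step that yields $h_1(F^H)=n$.

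Second, your case analysis has a hole. Lemma \ref{divisibility} needs the common degree $d=\deg F_{\mathrm{red}}-1$ of the $g_j$ to be strictly greater than $1$, so the exceptional cases are all $F$ with $\deg F_{\mathrm{red}}\le 2$: namely $F=p^m$ with $p$ linear, $F=p_1^{m_1}p_2^{m_2}$ with $p_1,p_2$ distinct linear forms, and $F=p^m$ with $p$ an irreducible quadric. You only flag the single-prime-power case. The two-linear-factor case, where $d=1$ even though $F$ has two distinct prime factors, is not covered by your dichotomy; it, together with the case $p^m$ for linear $p$, must be excluded by observing that there $F$ involves essentially at most two variables, contradicting the standing assumption that the codimension $n+1\ge 3$ --- which is exactly how the paper disposes of them. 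Your treatment of the remaining case $F=p^m$ with $p$ a nondegenerate quadric (reduce to $h_1(p^H)=n$, or equivalently compute directly for $H=y_0$ and conclude by semicontinuity) is fine and matches the paper.
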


\begin{proof}  
By inverse systems,  since the codimension of $A$ is $n+1$, we have $$\dim_k M_1=\dim_k M_{e-1}=n+1,$$
where $M_{e-1}=\langle \partial_0 F,\dots,\partial_n F\rangle$ is the $k$-vector space spanned by the first partials of $F$. Thus, we want to show that $\dim_k M^H_1=\dim_k M^H_{e-1}=n$, for a general linear form $H=\alpha_0y_0+\dots+\alpha_ny_n$. It is easy to see that the theorem is proven whenever we show that no nonzero element in $M_{e-1}$ (i.e., no nonzero linear combination of the $\partial_i F$) is  divisible by $H$.

Set $g=\gcd(\partial_0F,\dots, \partial_nF)$ and $f_i=\partial_iF/g$, for all $i$. When $g=1$, Lemma \ref{divisibility} proves the theorem. Thus we can assume $g$ has positive degree. The $f_i$ are clearly all forms of the same degree, say $d$. If $d>1$,  then we are again done by Lemma \ref{divisibility}, since only finitely many linear forms can divide $g$ and we are assuming that $H$ is general. 

Hence let $d\le 1$. If $p_1^{e_1}p_2^{e_2}\dots p_t^{e_t}$ is an irreducible factorization of $F$, then it easily follows from Lemma \ref{gcd} that $d=\left(\sum_{j=1}^t \deg p_j\right) -1$. Thus,  $\sum_{j=1}^t \deg p_j \le 2$. 

We deduce that $F$ can only be equal to $p_1^{m_1}p_2^{m_2}$, with $p_1$ and $p_2$ linear forms, or to $p^m$, with $p$ linear or quadratic. 
Because any quadratic form can be diagonalized, after a change of variables we promptly reduce $F$ to the following three possible cases:
$$
F=
\begin{cases} y_0^{m_1}y_1^{m_2}; \\ 
y_0^m;\\
\left(\sum_{i=0}^n c_iy_i^2\right)^m.\end{cases}
$$

In the first two cases, the first derivatives of  $F$ span a space of dimension at most 2, against the assumption $\dim_k M_{e-1}=n+1\ge 3$. Thus, $F=\left (\sum_i c_iy_i^2\right)^m$. Note that $c_i\neq 0$ for all $i$, since the first derivatives of $F$ are linearly independent. It is now immediate to see that, e.g., $H=y_0$ yields $\dim_k M^H_{e-1}=n$. Thus, the result for $H$ general follows by semicontinuity.
\end{proof}

\begin{corollary}
The GIC holds in socle degree $e\le 5$.
\end{corollary}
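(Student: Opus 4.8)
The plan is simply to assemble Proposition~\ref{f} and Theorem~\ref{n}. For $e\le 3$ there is nothing to prove: every vector $(1,r,1)$ and $(1,r,r,1)$ is a Gorenstein HF (the latter, for instance, via the inverse system form $y_1^e+\dots+y_r^e$), and since all such $h$-vectors occur for every $r\ge 1$, the GIC holds vacuously in those socle degrees. For $e\in\{4,5\}$, Proposition~\ref{f} reduces the GIC in socle degree $e$ to the single statement that $f_e$ is a nondecreasing function of $r$, so it suffices to prove $f_e(r-1)\le f_e(r)$ for every $r\ge 2$.

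First I would dispose of the trivial base case $r=2$: one has $f_e(1)=1$ (from $F=y_0^e$), while Macaulay's growth bound forbids a Gorenstein HF of the form $(1,2,1,\dots)$ in socle degree $\ge 4$, so $f_e(2)=2$ (realized by $y_0^e+y_1^e$) and thus $f_e(1)\le f_e(2)$. Now fix $r=n+1\ge 3$ and choose a form $F\in k[y_0,\dots,y_n]$ of degree $e$ in essentially $r$ variables whose associated Gorenstein algebra has second HF entry $h_2(F)=f_e(r)$; such an $F$ exists by the very definition of $f_e$. Pick a Zariski-general linear form $H$. Since $F$ has only finitely many linear factors, a general $H$ does not divide $F$, so $F^H\neq 0$ and $\deg F^H=e$; hence the inverse system $M^H=\langle F^H\rangle\subset S^H$ defines a Gorenstein algebra $A^H$ of socle degree exactly $e$. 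By Theorem~\ref{n} the codimension of $A^H$ equals $h_1(F^H)=n=r-1$, and by the elementary inequalities $h_i(F^H)\le h_i(F)$ noted just before Theorem~\ref{n} we get $h_2(F^H)\le h_2(F)=f_e(r)$. Therefore $\bigl(1,\,r-1,\,h_2(F^H),\,\dots,\,1\bigr)$ is a Gorenstein HF of socle degree $e$ and codimension $r-1$, which forces $f_e(r-1)\le h_2(F^H)\le f_e(r)$. Hence $f_e$ is nondecreasing, and the Corollary follows from Proposition~\ref{f}.

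All of the mathematical substance sits in Theorem~\ref{n}, which in turn relies on the Bertini-type Lemma~\ref{divisibility} (no general linear form divides a nonzero member of a base-point-free linear system of forms of degree $>1$) together with the reduction, via Lemma~\ref{gcd}, of the remaining low-degree cases to the single diagonal form $F=\bigl(\sum_i c_iy_i^2\bigr)^m$, where a coordinate hyperplane works and semicontinuity finishes the job. At the level of the Corollary the only point requiring care is the bookkeeping in the paragraph above: one must verify that a general restriction $F^H$ retains socle degree \emph{exactly} $e$ (not merely $\le e$) and has codimension \emph{exactly} $r-1$, so that the minimality encoded in $f_e(r)$ genuinely transfers to an upper bound for $f_e(r-1)$. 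Once Theorem~\ref{n} is granted, no further computation is needed.
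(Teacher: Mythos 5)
Your proposal is correct and follows essentially the same route as the paper: reduce via Proposition~\ref{f} to the monotonicity of $f_e$, then use Theorem~\ref{n} together with the inequalities $h_i(F^H)\le h_i(F)$ to produce from a minimal example in codimension $r$ a Gorenstein HF in codimension $r-1$ with second entry at most $f_e(r)$. The extra bookkeeping you supply (the case $r\le 2$, and the fact that $F^H$ retains socle degree exactly $e$) is exactly what the paper dismisses as trivial or absorbs into Theorem~\ref{n}.
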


\begin{proof}
Assume $r\ge 3$, otherwise the result is trivial. By Proposition \ref{f}, it suffices to show that $f_4$ and $f_5$ are nondecreasing functions of $r$. Theorem \ref{n} implies that if $$(1,h_1=r,h_2=f_e(r),\dots,h_{e-1},h_e=1)$$ is a Gorenstein HF, then there exists a Gorenstein HF $$(1,h_1'=r',h_2',\dots,h_{e-1}',h_e'=1)$$ with $h_i'\le h_i$ for all $i$ and $r'=r-1$. This immediately gives $f_e(r-1)\le h_2'\le f_e(r)$, which completes the proof.
\end{proof}

\section{Acknowledgements} We are grateful to the referee for their careful reading of our manuscript and for pointing out some minor issues in the original version. Most of the research contained in this paper took place during a visiting professorship of FZ in Fall 2017, for which he warmly thanks RPS and the MIT Math Department. He also thanks Tony Iarrobino and Juan Migliore for comments. The work of SGP was done as part of a research project supervised by FZ, for which SGP is grateful to MIT for financial support via a UROP grant. FZ was partially supported by a Simons Foundation grant (\#274577). Finally, RPS and FZ would like to acknowledge that the contribution of SGP went way beyond the expected role of an undergraduate student, and was in fact crucial to the success of this project.


\end{document}